  	\newcommand{\Z}{\ensuremath{\mathbb{Z}}}   
  	\newcommand{\Q}{\ensuremath{\mathbb{Q}}}   
  	\newcommand{\R}{\ensuremath{\mathbb{R}}} 
	\newcommand{\C}{\ensuremath{\mathbb{C}}}
   	\def\SAut{{\rm{SAut}}$(F_n)$}
	\def\GL{{\rm{GL}}}
	\def\SL{{\rm{SL}}}
	\def\Z{{\mathbb{Z}}}
	\def\CAT{{\rm{CAT}}$(0)$}
\theoremstyle{plain}
\newtheorem*{NewTheoremA}{Theorem A}
\newtheorem*{NewTheoremC}{Theorem C}
\newtheorem*{NewTheoremD}{Theorem D}
\newtheorem*{NewCorollaryB}{Corollary B}
\newtheorem*{NewPropositionE}{Proposition E}
\newtheorem{theorem}{Theorem}[section]
\newtheorem{lemma}[theorem]{Lemma}
\newtheorem{proposition}[theorem]{Proposition}
\newtheorem{definition}[theorem]{Definition}
\title{Representations of groups with CAT(0) fixed point property}
\author{Olga Varghese}
\thanks{Research partially supported by SFB 878.}
\address{Olga Varghese\\
Department of Mathematics\\
M\"unster University\\ 
Einsteinstra\ss e 62\\
48149 M\"unster (Germany)}
\email{olga.varghese@uni-muenster.de}
\begin{document}
\pagenumbering{arabic}
\begin{abstract}
We show that certain representations over fields with positive characteristic of groups having \CAT\ fixed point property ${\rm F}\mathcal{B}_{\widetilde{A}_n}$ have finite image. In particular, we obtain rigidity results for representations of the following groups: the special linear group over $\Z$, $\SL_k(\Z)$, the special automorphism group of a free group, ${\rm SAut}(F_k)$, the mapping class group of a closed orientable surface, ${\rm Mod}(\Sigma_g)$, and many other groups.

In the case of characteristic zero we show that low dimensional complex representations of groups having \CAT\ fixed point property ${\rm F}\mathcal{B}_{\widetilde{A}_n}$ have finite image if they always have compact closure.
\end{abstract}
\maketitle

\section{Introduction and statements}
In this paper we study linear representations of groups with certain \CAT\ fixed point property. Let $\mathcal{B}_{\widetilde{A}_n}$ be the class of buildings of type $\widetilde{A}_n$. 
Roughly speaking, a building of type $\widetilde{A}_n$ is a highly symmetrical $n$-dimensional \CAT\ simplicial complex with a vertex coloring. 
We say that a group $G$ has property ${\rm F}\mathcal{B}_{\widetilde{A}_n}$ if any  simplicial color-preserving action of $G$ on any member of $\mathcal{B}_{\widetilde{A}_n}$ has a fixed point.  Our investigations on property ${\rm F}\mathcal{B}_{\widetilde{A}_n}$ are motivated by Serre's property ${\rm F}\mathcal{A}$. Recall that a group $G$ is said to satisfy Serre's property ${\rm F}\mathcal{A}$ if every action, without inversions, of $G$ on a simplicial tree has a fixed point. 
 Note that if a group $G$ has Serre's property ${\rm F}\mathcal{A}$ it  has also property 
${\rm F}\mathcal{B}_{\widetilde{A}_1}$, since buildings of type $\widetilde{A}_1$ are trees without leaves. 

We show that property ${\rm F}\mathcal{B}_{\widetilde{A}_n}$ strongly affects  the representation theory of groups. The following theorem illustrates this fact.

\begin{NewTheoremA}
Let $G$ be a finitely generated group with property ${\rm F}\mathcal{B}_{\widetilde{A}_n}$ and $\rho:G\rightarrow{\rm GL}_{n+1}(K)$ a linear representation over a field $K$ with positive characteristic. Then $\rho(G)$ is finite.
\end{NewTheoremA}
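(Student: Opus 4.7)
The approach is a classical Alperin--Shalen-style argument, using the Bruhat--Tits buildings of $\SL_{n+1}$ over non-archimedean completions as members of $\mathcal{B}_{\widetilde{A}_n}$. First I would reduce to a finitely generated base field: since $G$ is finitely generated, so is $\rho(G)$; letting $R \subseteq K$ be the $\mathbb{F}_p$-subalgebra generated by the entries of finitely many generators of $\rho(G)$ together with their inverse determinants, we have $\rho(G) \subseteq \GL_{n+1}(k)$ with $k = \mathrm{Frac}(R)$ a finitely generated extension of $\mathbb{F}_p$. The algebraic closure $\mathbb{F}_q$ of $\mathbb{F}_p$ in $k$ is a finite field, and a classical fact identifies $\mathbb{F}_q$ with the intersection $\bigcap_v \mathcal{O}_v$ of all valuation rings of rank-one valuations $v$ of $k$. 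It will thus suffice to force every matrix entry (or at least every scalar-invariant character) of $\rho(G)$ into each $\mathcal{O}_v$; combined with $|\GL_{n+1}(\mathbb{F}_q)| < \infty$, this gives finiteness of $\rho(G)$.

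For each rank-one $v$, let $k_v$ be the completion and $X_v$ the Bruhat--Tits building of $\SL_{n+1}(k_v)$: a \CAT\ simplicial complex of type $\widetilde{A}_n$ (hence a member of $\mathcal{B}_{\widetilde{A}_n}$), whose vertices are homothety classes of $\mathcal{O}_v$-lattices in $k_v^{n+1}$ and on which $\SL_{n+1}(k_v)$ acts simplicially and color-preservingly. Composing $\rho$ with $\GL_{n+1}(k) \hookrightarrow \GL_{n+1}(k_v)$ induces a simplicial action of $G$ on $X_v$; the color-preserving part is the subgroup on which $v(\det\rho(\cdot)) \equiv 0 \pmod{n+1}$, which has index dividing $n+1$. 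Applying property ${\rm F}\mathcal{B}_{\widetilde{A}_n}$ to this color-preserving restriction, and then taking the CAT(0) circumcenter of the resulting finite $G$-orbit, yields a $\rho(G)$-fixed point of $X_v$. In particular, $\rho(G)$ stabilizes an $\mathcal{O}_v$-lattice up to scalars, so the projective image of $\rho(G)$ in $\mathrm{PGL}_{n+1}(k_v)$ is bounded; equivalently, the scalar-invariant characters of $\rho(g)$ (coefficients of the characteristic polynomial divided by appropriate powers of the determinant) are $v$-integral. Running this over all $v$ puts these invariants into $\mathbb{F}_q$, yielding finiteness of the projective image via a Brauer--Nesbitt-type argument, and finally of $\rho(G)$ itself.

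The main obstacle I anticipate is twofold. First, the color-preservation requirement: the natural $\GL_{n+1}(k_v)$-action on $X_v$ only preserves colors on a finite-index subgroup, so applying property ${\rm F}\mathcal{B}_{\widetilde{A}_n}$ to $G$ itself requires the CAT(0) circumcenter step above. Second, the passage from boundedness of the projective image to genuine finiteness of $\rho(G)$ — i.e.\ controlling the scalar/determinant contribution — must use positive characteristic decisively: for instance, a finitely generated subgroup of $k^{\times}$ whose image in $k_v^{\times}/\mathcal{O}_v^{\times}$ is bounded for every $v$ is torsion, and the positive-characteristic torsion of $k^{\times}$ is the finite group $\mathbb{F}_q^{\times}$. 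The remaining bookkeeping (choice of lattices, descent across valuations, and the semisimplification step) is then standard.
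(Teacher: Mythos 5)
Your overall strategy --- reduce to a finitely generated field $k$ over $\mathbb{F}_p$, act on the affine building of type $\widetilde{A}_n$ attached to each discrete valuation $v$ of $k$, use the fixed point property to force conjugation-invariant data of $\rho(g)$ into $\bigcap_v\mathcal{O}_v=\mathbb{F}_q$, and conclude finiteness --- is essentially the paper's. But there is a genuine gap where you produce the fixed point. The action of $G$ on $X_v$ via $\rho$ is a priori only simplicial, not color-preserving, and the hypothesis ${\rm F}\mathcal{B}_{\widetilde{A}_n}$ applies only to color-preserving actions of $G$ itself. Your workaround --- restrict to the finite-index subgroup $H=\{g\in G: v(\det\rho(g))\equiv 0 \bmod (n+1)\}$, get an $H$-fixed point, then take the circumcenter of its finite $G$-orbit --- presupposes that $H$ has a fixed point, which would require $H$ to have property ${\rm F}\mathcal{B}_{\widetilde{A}_n}$; fixed point properties of this kind are not inherited by finite-index subgroups, and nothing in the hypotheses supplies the $H$-fixed point you start from. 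The paper closes this gap differently (Lemma 2.5): a group with ${\rm F}\mathcal{B}_{\widetilde{A}_n}$ has no quotient isomorphic to $\Z$, because $\Z$ acts type-preservingly and without fixed points on the Coxeter complex of type $\widetilde{A}_n$ (all simplex stabilizers there are finite). Hence the homomorphism $v\circ\det\circ\rho: G\to\Z$ is trivial, so $\rho(G)$ lies in the type-preserving subgroup $\GL_{n+1}(k)^{\circ}$ and the fixed point property applies to $G$ directly; the fixed vertex then conjugates $\rho(G)$ into $\GL_{n+1}(\mathcal{O}_v)$, giving $v$-integrality of the actual characteristic polynomial coefficients rather than only the scalar-normalized ones, which also removes your need to treat the determinant separately.

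The endgame is also vaguer than it needs to be: finiteness of the set of characteristic polynomials does not by itself yield finiteness of the projective image via Brauer--Nesbitt (that theorem only identifies semisimplifications); one would still need a Burnside-type trace argument together with control of unipotent parts. The paper's route is more direct and I would adopt it: once the characteristic polynomial coefficients lie in the algebraic closure of $\mathbb{F}_p$ in $k$, which is the finite field $\mathbb{F}_q$, every eigenvalue of $\rho(g)$ lies in a finite field and is therefore a root of unity; triangularizing over $\overline{k}$, a suitable power of $\rho(g)$ is unipotent and is killed by a further power of $p$ (Frobenius), so every $\rho(g)$ has finite order, and Schur's theorem on finitely generated torsion linear groups concludes. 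With the Lemma 2.5 repair and this replacement of your semisimplification step, the argument goes through.
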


As an application of Theorem A we obtain the following results.
\begin{NewCorollaryB}
Let $K$ be a field with positive characteristic. Then every linear representation
\begin{enumerate}
\item[(i)] $\SL_k(\Z)\rightarrow\GL_n(K)$ for $k\geq 3$ and all $n\in\mathbb{N}$,
\item[(ii)] $\SL_k(\Z[\frac{1}{p}])\rightarrow\GL_n(K)$ for $p$ prime number, $k\geq 3$ and $n\leq k-1$,
\item[(iii)] ${\rm SAut}(F_k)\rightarrow\GL_n(K)$ for $k\geq 3$ and $n\leq k-1$,
\item[(iv)] ${\rm Mod}(\Sigma_g)\rightarrow \GL_n(K)$ for $g\geq 2$ and $n\leq g$,
\item[(v)] $W\rightarrow\GL_d(K)$ for $d\leq n$ and $W$ is a f.g. Coxeter group such that every special parabolic subgroup of rank $n$ is finite.
\end{enumerate}
has finite image.
\end{NewCorollaryB}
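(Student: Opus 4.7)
The strategy is straightforward: apply Theorem A to each of the five cases. A representation $\rho: G \to \GL_n(K)$ with $K$ of positive characteristic has finite image as soon as $G$ is finitely generated and satisfies ${\rm F}\mathcal{B}_{\widetilde{A}_{n-1}}$. Finite generation is classical for every group on the list, so the entire corollary reduces to verifying (or invoking from the literature) the appropriate fixed point property in each row, and then matching the index $n-1$ with the target $\GL_n$ in Theorem A.

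For (i), $\SL_k(\Z)$ with $k\geq 3$ has Kazhdan's property (T), and groups with property (T) fix a point in every isometric simplicial action on a Euclidean building (essentially via Garland's vanishing theorem and its subsequent refinements), so ${\rm F}\mathcal{B}_{\widetilde{A}_n}$ holds for every $n$. For (ii), although $\SL_k(\Z[\frac{1}{p}])$ famously fails ${\rm F}\mathcal{B}_{\widetilde{A}_{k-1}}$ because of its natural action on the Bruhat--Tits building for $\SL_k(\Q_p)$, on buildings of strictly smaller type a dimension/rank argument forces a global fixed point — this is precisely what makes the bound $n\leq k-1$ appear in the statement. For (iii) and (iv), I would invoke the low-dimensional rigidity results that give ${\rm SAut}(F_k)$ the property ${\rm F}\mathcal{B}_{\widetilde{A}_{k-2}}$ and ${\rm Mod}(\Sigma_g)$ the property ${\rm F}\mathcal{B}_{\widetilde{A}_{g-1}}$, in the spirit of the Bridson--Vogtmann--Wade circle of results and the Franks--Handel/Korkmaz rigidity theorems respectively; these are exactly the dimensional thresholds that reappear in the corollary. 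For (v), the key ingredients are that every finite Coxeter group fixes a point in any \CAT\ action (as a finite group acting by isometries on a \CAT\ space), and a standard induction on the Coxeter diagram which converts the hypothesis \emph{every special parabolic subgroup of rank $n$ is finite} into ${\rm F}\mathcal{B}_{\widetilde{A}_{d-1}}$ for $d\leq n$.

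The main obstacle is concentrated in (iii) and (iv), where pinning down the exact range of the fixed point property rests on non-trivial structural results about ${\rm SAut}(F_k)$ and ${\rm Mod}(\Sigma_g)$ and their actions on low-dimensional \CAT\ complexes; the remaining cases are more directly accessible, via property (T) for (i), a dimensional argument on Bruhat--Tits buildings for (ii), and an induction on Coxeter diagrams for (v). Once the fixed point properties are in hand, each item of the corollary is an immediate application of Theorem A with the appropriate choice of $n$.
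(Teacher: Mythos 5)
Your overall architecture is exactly the paper's: finite generation plus property ${\rm F}\mathcal{B}_{\widetilde{A}_{n-1}}$ feeds into Theorem A, and you handle the index shift between $\GL_n$ and $\widetilde{A}_{n-1}$ correctly. The paper disposes of the fixed point properties by citation (its Proposition 2.4 together with the observation that ${\rm F}\mathcal{A}_m$ implies ${\rm F}\mathcal{B}_{\widetilde{A}_m}$): Farb's Helly-theorem paper for (i) and (ii), a result of Bridson for (iii) (attributed to private communication) and for (iv) (his theorem on the dimension of \CAT\ spaces admitting ${\rm Mod}(\Sigma_g)$-actions), and Barnhill's ${\rm F}\mathcal{A}_n$ theorem for Coxeter groups for (v). So the corollary really is a matter of quoting the right results, and this is where your proposal has genuine problems.

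First, your justification of (i) rests on the claim that a group with property (T) fixes a point in every isometric simplicial action on a Euclidean building. This is false: a cocompact lattice in $\SL_3(\Q_p)$ has property (T) and acts properly, hence without a global fixed point, on the Bruhat--Tits building of type $\widetilde{A}_2$. The actual source of ${\rm F}\mathcal{A}_k$ for $\SL_k(\Z)$, $k\geq 3$, in all dimensions is Farb's argument via Helly's theorem and the generation of $\SL_k(\Z)$ by commuting families of unipotent (hence distorted, hence elliptic) subgroups --- the same mechanism that gives (ii) with the bound $n\leq k-1$, rather than a separate ``dimension/rank argument.'' Second, for (iv) you invoke Franks--Handel and Korkmaz; those results concern actions on the circle and low-dimensional complex linear representations, not \CAT\ fixed point properties, so they do not yield ${\rm F}\mathcal{B}_{\widetilde{A}_{g-1}}$ for ${\rm Mod}(\Sigma_g)$; the relevant input is Bridson's theorem that ${\rm Mod}(\Sigma_g)$ has ${\rm F}\mathcal{A}_{g-1}$. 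Your sketches for (iii) and (v) point at the right circles of ideas (Bridson for ${\rm SAut}(F_k)$; finite parabolics fix points and are assembled by a Helly-type argument \`a la Farb--Barnhill for Coxeter groups), though in (v) the work is done by Farb's Helly theorem rather than a routine induction on the diagram. In short: right reduction, but two of the five fixed point properties are supported by incorrect or inapplicable citations, and these are precisely the inputs the corollary cannot do without.
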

Results of similar flavor were proved in
\cite{Varghese1} 
where it is shown that low dimensional representations of ${\rm SAut}(F_k)$ over fields of characteristic not equal $2$ are trivial. Concerning  the mapping class group of a closed orientable surface of genus $g$, ${\rm Mod}(\Sigma_g)$, it is  proved in \cite{Button} that this group has no faithful linear representation in any dimension over any field of positive characteristic.

Let us further mention that Theorem A is not true for fields of characteristic zero, as we can 
consider the group ${\rm SL}_n(\Z)$ which has property ${\rm F}\mathcal{B}_{\widetilde{A}_{n-1}}$  but the canonical embedding $\SL_n(\Z)\hookrightarrow\GL_n(\R)$ has infinite image.

In the case of characteristic zero we prove:
\begin{NewTheoremC}
Let $G$ be a finitely generated group with property  ${\rm F}\mathcal{B}_{\widetilde{A}_n}$ and $\rho:G\rightarrow{\rm GL}_{n+1}(\mathbb{Q})$ a linear representation. If there exists a positive definite quadratic form on $\Q^{n+1}$ which is invariant under $\rho(G)$, then $\rho(G)$ is finite.
\end{NewTheoremC}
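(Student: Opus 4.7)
My approach is to exploit, for each prime $p$, the Bruhat--Tits building $X_p$ associated to $\SL_{n+1}(\Q_p)$, which is a CAT(0) building of type $\widetilde{A}_n$. Composing $\rho$ with the inclusion $\GL_{n+1}(\Q) \hookrightarrow \GL_{n+1}(\Q_p)$ gives a simplicial action of $G$ on $X_p$. The invariance of the positive definite form $q$ forces $\rho(G) \subset \mathrm{O}(q,\Q)$, hence $\det \rho(g) = \pm 1$ for every $g \in G$; in particular $v_p(\det \rho(g)) = 0$, which is exactly the condition ensuring that $\rho(g)$ acts on $X_p$ by a color-preserving simplicial isometry.

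Property ${\rm F}\mathcal{B}_{\widetilde{A}_n}$ then yields a global fixed point of $\rho(G)$ on $X_p$. Color-preservation upgrades this to a fixed vertex: the minimal closed simplex containing the CAT(0) fixed point is $\rho(G)$-invariant as a set, and its vertices carry pairwise distinct colors, hence must each be fixed individually. Since vertex stabilizers in the action of $\GL_{n+1}(\Q_p)$ are conjugates of the maximal compact subgroup $\GL_{n+1}(\Z_p)$, we conclude that for every prime $p$ and every $g \in G$ the matrix $\rho(g)$ is conjugate in $\GL_{n+1}(\Q_p)$ into $\GL_{n+1}(\Z_p)$. In particular, the characteristic polynomial of $\rho(g)$ has coefficients in $\Z_p$ for all $p$; together with rationality this places the coefficients in $\Z$, so the eigenvalues of every $\rho(g)$ are algebraic integers.

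Now the archimedean side enters: since $\rho(G) \subset \mathrm{O}(q,\R)$, each eigenvalue has complex absolute value one, and the same is true of every Galois conjugate (which is again an eigenvalue of $\rho(g)$, because the characteristic polynomial has rational coefficients). Kronecker's theorem then forces every eigenvalue to be a root of unity of degree at most $n+1$, hence of order bounded by a universal constant $N = N(n)$. Since elements of $\mathrm{O}(q,\R)$ are diagonalizable over $\C$, this gives $\rho(g)^N = I$ for all $g \in G$, so $\rho(G)$ is a finitely generated subgroup of $\GL_{n+1}(\Q)$ of bounded exponent, and is therefore finite by Schur's theorem on torsion linear groups in characteristic zero. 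The main obstacle is the careful combination of local (integrality of the characteristic polynomial from $p$-adic fixed points) and archimedean (eigenvalues on the unit circle) information via Kronecker; the color-preservation hypothesis needed to apply ${\rm F}\mathcal{B}_{\widetilde{A}_n}$ comes for free from the positive definiteness of $q$.
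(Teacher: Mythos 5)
Your proof is correct and follows essentially the same route as the paper: fixed points on the $p$-adic buildings of type $\widetilde{A}_n$ give integrality of the characteristic polynomials, orthogonality puts the eigenvalues (and all their Galois conjugates) on the unit circle, Kronecker forces roots of unity, diagonalizability gives finite order, and Schur's theorem finishes. The only differences are cosmetic: you inline the special case $K=\Q$ of the paper's Proposition E(ii) (where $\bigcap_p \Z_p = \Z$ replaces the appeal to Grothendieck), and you obtain type-preservation from $\det\rho(g)=\pm 1$ rather than from the paper's Lemma~2.5 on the absence of $\Z$-quotients.
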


Further, we generalize a theorem of Alperin \cite{Alperin}, who proved the following result for $n=1$. 
\begin{NewTheoremD}
Let $G$ be a finitely generated group with property  ${\rm F}\mathcal{B}_{\widetilde{A}_n}$. If  every $(n+1)$-dimensional representation $\rho:G\rightarrow{\rm GL}_{n+1}(\C)$ has image with compact closure, then the image $\rho(G)$ is finite for every $(n+1)$-dimensional representation.
\end{NewTheoremD}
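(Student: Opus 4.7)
The plan is to exploit the archimedean compact-closure hypothesis together with boundedness at non-archimedean places, extracted from property ${\rm F}\mathcal{B}_{\widetilde{A}_n}$, to control the matrix entries of $\rho$ at every valuation of the finitely generated ring they generate, and then deduce that $\rho(G)$ is finite.

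Let $\rho\colon G\rightarrow\GL_{n+1}(\C)$ be a representation, fix a finite generating set $\{g_1,\dots,g_k\}$ of $G$, and let $A\subseteq\C$ denote the $\Z$-subalgebra generated by the matrix entries of $\rho(g_i)^{\pm 1}$. Then $A$ is a finitely generated commutative ring with $\rho(G)\subseteq\GL_{n+1}(A)$, and $K:=\mathrm{Frac}(A)$ is a finitely generated extension of $\Q$. For each non-archimedean valuation $v$ of $K$, the group $\SL_{n+1}(K_v)$ acts on its Bruhat--Tits building $X_v\in\mathcal{B}_{\widetilde{A}_n}$; composing with $\rho$ and invoking property ${\rm F}\mathcal{B}_{\widetilde{A}_n}$ produces a $\rho(G)$-fixed vertex, which corresponds to a $\rho(G)$-invariant homothety class of $\mathcal{O}_v$-lattices in $K_v^{n+1}$. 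Equivalently, $\rho(G)$ lies, up to conjugation, inside the stabilizer $K_v^{\times}\cdot\GL_{n+1}(\mathcal{O}_v)$, so its image $\bar\rho(G)$ in $\mathrm{PGL}_{n+1}(K_v)$ is bounded. For each archimedean place $v$ of $K$ the corresponding embedding $\sigma_v\colon K\hookrightarrow\C$ makes $\sigma_v\circ\rho$ into a complex $(n+1)$-dimensional representation, whose image has compact closure by hypothesis; this gives the analogous boundedness.

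Next I would argue that $K$ is in fact a number field: if some $t\in A$ were transcendental over $\Q$, a discrete non-archimedean valuation of $K$ with $|t|_v>1$ would, once translated back to the individual matrix entries, contradict the non-archimedean boundedness, forcing $K/\Q$ to be algebraic and hence finite. In the number field case, $\bar\rho(G)\subseteq\mathrm{PGL}_{n+1}(K)$ is bounded at every place of $K$, and such subgroups are finite by a standard adelic discreteness argument. For the central kernel $C:=\rho(G)\cap Z(\GL_{n+1}(\C))$ of the projection $\rho(G)\twoheadrightarrow\bar\rho(G)$, every scalar in $C$ has archimedean absolute value $1$ (applying the hypothesis through every embedding $K\hookrightarrow\C$) and is an algebraic integer (from the non-archimedean boundedness); by Kronecker's theorem each such element is a root of unity. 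Since $C$ is finitely generated, $C$ is finite, whence so is $\rho(G)$.

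The main difficulty I anticipate is in the non-archimedean step: passing from the projective boundedness of $\bar\rho(G)\subseteq\mathrm{PGL}_{n+1}(K_v)$ that property ${\rm F}\mathcal{B}_{\widetilde{A}_n}$ directly supplies to integrality statements for the actual matrix entries of $\rho(g)$. Fixing a vertex of $X_v$ only controls $\rho(G)$ modulo the scaling cocycle $g\mapsto v(\det\rho(g))/(n+1)\in\Z$, so one must show this homomorphism vanishes, amounting to the absence of non-trivial homomorphisms $G\to\Z$---a rigidity which should itself follow from property ${\rm F}\mathcal{B}_{\widetilde{A}_n}$. Once this is handled, the reduction to a number field and the final finiteness follow by standard adelic arguments.
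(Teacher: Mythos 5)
The critical gap is your reduction to a number field. Fixing a vertex of the Bruhat--Tits building only places $\rho(G)$ inside a \emph{conjugate} of $\GL_{n+1}(\mathcal{O}_v)$ (with the conjugating matrix depending on $v$), so it bounds conjugation-invariant data --- the coefficients of the characteristic polynomials, hence the eigenvalues --- but says nothing about the individual matrix entries of $\rho(g)$. Your step ``a valuation with $|t|_v>1$ would, once translated back to the individual matrix entries, contradict the non-archimedean boundedness'' therefore does not run, and its conclusion is in fact false: conjugating a finite-image representation by a matrix with transcendental entries produces a representation satisfying all the hypotheses whose entries generate a transcendental extension of $\Q$, e.g.\ $\Z/2\to\GL_2(\C)$ sending the generator to $\left(\begin{smallmatrix} 1 & -2\pi \\ 0 & -1\end{smallmatrix}\right)$. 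Since your entire second half (discreteness of $\mathrm{PGL}_{n+1}(K)$ in the adeles, boundedness at every place, Kronecker for the central scalars in $K^\times$) requires $K$ to be a global field, the argument breaks at this point.

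The repair is to drop the matrix entries and the adelic machinery and work only with the quantities the buildings actually control, which is what the paper does. The fixed-vertex argument at every discrete valuation of the (possibly transcendental) field generated by the entries, combined with Grothendieck's description of the intersection of all discrete valuation rings, shows that the characteristic polynomial coefficients and the eigenvalues of every $\rho(g)$ are integral over $\Z$ (Proposition E(ii)); your observation that the determinant--valuation cocycle must vanish because $G$ has no quotient $\Z$ is exactly the paper's Lemma 2.4 and is used there. The hypothesis that \emph{every} $(n+1)$-dimensional representation has image with compact closure then forces every archimedean conjugate of every eigenvalue to have absolute value $1$ (apply the hypothesis to $\sigma\circ\rho$ for each embedding $\sigma$), so by the Kronecker/Dirichlet-unit argument the eigenvalues are roots of unity. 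Each $\rho(g)$ lies in a compact group, hence is diagonalizable, hence has finite order, and Schur's theorem on finitely generated torsion linear groups gives finiteness of $\rho(G)$. This renders the $\mathrm{PGL}$/adelic detour and the separate treatment of the center unnecessary; as written, that detour rests on the unsupported (and false) claim that $K$ is a number field.
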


The general strategy of the proofs consists of three steps.
First, as Farb noted in \cite[1.7]{Farb} one can generalize Serre's result \cite[6.2.22]{Serre} as follows.

\begin{NewPropositionE}
\label{Coefficients}
Let $G$ be a finitely generated group with property ${\rm F}\mathcal{B}_{\widetilde{A}_n}$ and let $\rho:G\rightarrow\GL_{n+1}(K)$ be a linear representation over a field $K$. 
\begin{enumerate}
\item[(i)] If $K$ has positive characteristic, then the eigenvalues of $\rho(g)$ for $g\in G$ are roots of unity.
\item[(ii)] If $K$ has characteristic zero, then the coefficients of the characteristic polynomial of $\rho(g)$ for $g\in G$ and his roots  are integral over $\mathbb{Z}$.
\end{enumerate}
\end{NewPropositionE}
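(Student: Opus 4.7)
The plan is to follow Serre's classical argument for the case $n=1$ (\cite[6.2.22]{Serre}), replacing the tree on which $SL_2$ over a local field acts by the Bruhat--Tits building of type $\widetilde{A}_n$ on which $SL_{n+1}$ over a non-Archimedean local field acts. Since $G$ is finitely generated, the matrix entries of the images of a finite generating set lie in some finitely generated subfield $K_0\subseteq K$, so $\rho$ factors through $GL_{n+1}(K_0)$ and we may assume $K$ itself is finitely generated over its prime field.

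For every discrete valuation $v$ on $K$ (or on a finite extension), let $K_v$ be the completion with valuation ring $\mathcal{O}_v$, and let $\Delta_v\in\mathcal{B}_{\widetilde{A}_n}$ be the Bruhat--Tits building associated to $PGL_{n+1}(K_v)$. Composing $\rho$ with $GL_{n+1}(K_0)\to PGL_{n+1}(K_v)$ yields a simplicial action of $G$ on $\Delta_v$ whose color-permutation is controlled by the homomorphism $v\circ\det\circ\rho$ modulo $n+1$. To apply ${\rm F}\mathcal{B}_{\widetilde{A}_n}$, I would first observe that ${\rm F}\mathcal{B}_{\widetilde{A}_n}$ forces $G^{\mathrm{ab}}$ to be torsion, because $\mathbb{Z}$ admits a fixed-point-free color-preserving translation action on a thick $\widetilde{A}_n$-building (for instance via the hyperbolic element $\mathrm{diag}(\pi,1,\ldots,1,\pi^{-1})\in SL_{n+1}$ over a local field with uniformizer $\pi$). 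Hence $\det\rho(g)$ lies in the torsion of $K^{\times}$ for every $g$, the homomorphism $v\circ\det\circ\rho$ is trivial, and the induced action of $G$ on $\Delta_v$ is color-preserving.

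Property ${\rm F}\mathcal{B}_{\widetilde{A}_n}$ now furnishes a fixed vertex; after conjugating it to the standard one, $\rho(G)$ sits inside the stabilizer $K_v^{\times}\cdot GL_{n+1}(\mathcal{O}_v)$, and writing $\rho(g)=\lambda_g B_g$ with $B_g\in GL_{n+1}(\mathcal{O}_v)$ the identity $(n+1)v(\lambda_g)=v(\det\rho(g))=0$ forces $\lambda_g\in\mathcal{O}_v^{\times}$, hence $\rho(g)\in GL_{n+1}(\mathcal{O}_v)$. The characteristic polynomial of $\rho(g)$ therefore has $\mathcal{O}_v$-integral coefficients, and each of its roots lies in $\mathcal{O}_w$ for any extension valuation $w$. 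Running this argument over all discrete valuations, in case (ii) the characteristic polynomial coefficients and eigenvalues are $v$-integral at every non-Archimedean place, hence integral over $\mathbb{Z}$; in case (i) the eigenvalues lie in the intersection of all valuation rings of a finitely generated extension of $\mathbb{F}_p$, which is $\overline{\mathbb{F}_p}$, so every nonzero eigenvalue is a root of unity. The delicate point is precisely matching the color-preserving hypothesis of ${\rm F}\mathcal{B}_{\widetilde{A}_n}$ with the natural action of $GL_{n+1}(K_v)$; the abelianization reduction above is what makes the argument go through for the whole group $G$ rather than only for a finite-index subgroup.
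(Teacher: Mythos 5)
Your proposal is correct and follows essentially the same route as the paper: reduce to a finitely generated subfield, attach to each discrete valuation the affine building of type $\widetilde{A}_n$, use the fact that ${\rm F}\mathcal{B}_{\widetilde{A}_n}$ rules out $\Z$-quotients so that $v\circ\det\circ\rho$ vanishes and the action is type-preserving, obtain a fixed vertex and hence conjugacy into $\GL_{n+1}(\mathcal{O}_v)$, and identify the intersection of all discrete valuation rings with the integral closure of the prime ring (Grothendieck, EGA II 7.1.8). The only cosmetic differences are your use of completions and of the $PGL_{n+1}$ building with a scalar-factor argument, where the paper works directly with $\Delta(K_\rho^{n+1},\nu)$ and the kernel of $\nu\circ\det$.
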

We include a proof of this proposition here which uses the same ideas as the proof in \cite[6.2.22]{Serre}. Then we show that each element in the image of a linear representation $\rho$ has finite order. Hence, the image of $\rho$ is a finitely generated torsion linear group which is by Schur's Theorem finite (\cite{Schur}, \cite[9.9]{Lam}). 

\section{General results of property ${\rm F}\mathcal{B}_{\widetilde{A}_n}$}
Definitions and properties concerning Coxeter groups and buildings of type 
 $\widetilde{A}_n$  can be found in \cite{Abramenko}, \cite{Humphreys}, \cite{Ronan}.

We need the following crucial definition. 
\begin{definition}
A group $G$ has  property ${\rm F}\mathcal{B}_{\widetilde{A}_n}$ if any simplicial type preserving action on any building of type $\widetilde{A}_n$ has a fixed point. 
\end{definition}

An important fact about buildings of type $\widetilde{A}_n$ is that its geometric realisation has a natural metric that satisfies the \CAT\ curvature condition, for more details see (\cite[11.16]{Abramenko}) and \cite{Haefliger}.

Let us compare property ${\rm F}\mathcal{B}_{\widetilde{A}_n}$ to the fixed point property ${\rm F}\mathcal{A}_n$ which was defined by Farb in \cite{Farb}. 
\begin{definition}
A group $G$ has property $F\mathcal{A}_n$ if any simplicial action of $G$ on any $n$-dimensional  simplicial \CAT\ complex has a fixed point (in the geometric realisation). 
\end{definition}
Since the geometric realisation of a building of type $\widetilde{A}_n$ is a $n$-dimensional \CAT\ simplicial complex, we immediately obtain the following result.
\begin{lemma}
\label{FAFB}
If a group $G$ has property ${\rm F}\mathcal{A}_n$, then it has property ${\rm F}\mathcal{B}_{\widetilde{A}_n}$.
\end{lemma}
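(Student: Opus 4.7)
The plan is to prove this by essentially unpacking the definitions, using the structural fact about buildings that was already recalled above. Concretely, let $G$ have property ${\rm F}\mathcal{A}_n$, fix an arbitrary building $X \in \mathcal{B}_{\widetilde{A}_n}$, and fix an arbitrary simplicial type-preserving action $G \curvearrowright X$. I want to produce a global fixed point in the geometric realisation $|X|$.

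The key input is the fact stated immediately before the lemma, namely that the geometric realisation of a building of type $\widetilde{A}_n$ is an $n$-dimensional simplicial \CAT\ complex (see \cite[11.16]{Abramenko} and \cite{Haefliger}). With this in hand the proof is a one-line verification: the given action on $X$ is in particular a simplicial action of $G$ on an $n$-dimensional \CAT\ simplicial complex, so property ${\rm F}\mathcal{A}_n$ applies directly and supplies a fixed point. Since $X$ and the action were arbitrary, $G$ has property ${\rm F}\mathcal{B}_{\widetilde{A}_n}$.

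Because there are no nontrivial estimates or constructions involved, there is essentially no obstacle beyond making sure that the hypotheses of ${\rm F}\mathcal{A}_n$ are actually met, which is where the cited \CAT\ fact is used; the type-preserving assumption on the action is only a restriction relative to ${\rm F}\mathcal{A}_n$ (making ${\rm F}\mathcal{B}_{\widetilde{A}_n}$ a formally weaker condition), so no further work is needed to match the setups. I expect the entire proof to fit in two or three sentences.
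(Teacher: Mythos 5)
Your proposal is correct and coincides with the paper's argument: the paper likewise derives the lemma immediately from the fact that the geometric realisation of a building of type $\widetilde{A}_n$ is an $n$-dimensional \CAT\ simplicial complex, so that ${\rm F}\mathcal{A}_n$ applies to any simplicial type-preserving action on such a building. Your additional remark that the type-preserving hypothesis only restricts the class of actions, making ${\rm F}\mathcal{B}_{\widetilde{A}_n}$ formally weaker, is accurate and consistent with the paper's treatment.
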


There are many  groups known to have property ${\rm F}\mathcal{A}_n$, hence by Lemma \ref{FAFB} these groups also have property ${\rm F}\mathcal{B}_{\widetilde{A}_n}$.

\begin{proposition}
\label{Examples}
\begin{enumerate}
\item[(i)] For $n\geq 3$ and all $k\in\mathbb{N}$ the group $\SL_n(\Z)$ has property ${\rm F}\mathcal{A}_k$,  (\cite[p. 3]{Farb}).
\item[(ii)] For $n\geq 3$ and $p$ prime number the group $\SL_n(\Z[\frac{1}{p}])$ has property ${\rm F}\mathcal{A}_{n-2}$, (\cite[1.2]{Farb}).
\item[(iii)] For $n\geq 3$ the group ${\rm SAut}(F_n)$ has property ${\rm F}\mathcal{A}_{n-2}$, (private communication with Bridson).
\item[(iv)] For $g\geq 2$ the group ${\rm Mod}(\Sigma_g)$ has property ${\rm F}\mathcal{A}_{g-1}$, (\cite[Theorem A]{Bridson}).
\item[(v)] Let $W$ be a Coxeter group, such that each special parabolic subgroup of rank $n$ is finite, then $W$ has property ${\rm F}\mathcal{A}_{n-1}$ (\cite[1.1]{Barnhill}).
\end{enumerate}
\end{proposition}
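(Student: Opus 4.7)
Each of the five items is a direct quotation of an existing theorem, so my plan is, for each one, to invoke the cited source and then verify that the formulation of property $\F\mathcal{A}_n$ used there matches Definition 2.2 of this paper, namely that every simplicial action on an $n$-dimensional \CAT\ simplicial complex has a fixed point in the geometric realisation. This verification is straightforward for items (i)--(iv); for (v) one should also check that Barnhill's statement, which is typically phrased in terms of actions on particular \CAT\ polyhedral complexes, really applies to arbitrary simplicial \CAT\ complexes of the correct dimension.

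The uniform technique underlying all five items, which I would summarise after the quotations, is a two-step scheme. First, by the Bruhat--Tits fixed point theorem, every finite subgroup $H \leq G$ acting simplicially on a complete \CAT\ complex $X$ has a global fixed point, so its fixed-point set $X^H$ is a nonempty closed convex subcomplex. Second, one selects a generating family $H_1,\ldots,H_m$ of finite (or otherwise controlled) subgroups and argues that the convex sets $X^{H_1},\ldots,X^{H_m}$ share a common point. This is where the dimension bound $n$ enters, via a Helly-type statement: in a \CAT\ complex of dimension $n$, finitely many convex subcomplexes whose pairwise and higher intersections satisfy the right combinatorial pattern must have a common point, and the admissible dimension is governed by the nerve of the generating family.

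Spelling the items out briefly: in (i) Farb uses the $\SL_2$-type root subgroups of $\SL_n(\Z)$, which are abundant enough to give the bound for arbitrary $k$; in (ii) the bound $n-2$ is sharp because $\SL_n(\Z[\frac{1}{p}])$ acts without global fixed point on its $(n-1)$-dimensional Euclidean building; in (iii) Bridson uses Nielsen-type generators of $\SAutn$ playing the role of the elementary matrices; in (iv) a Dehn-twist generating set yields the $g-1$ bound for $\mathrm{Mod}(\Sigma_g)$ as in \cite{Bridson}; in (v) Barnhill \cite{Barnhill} uses that $W$ is generated by its finite rank-$n$ special parabolics, whose fixed-point sets intersect by a nerve argument in an $(n-1)$-dimensional complex. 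The main obstacle is that each item requires combinatorial input particular to the group in question --- specific generators, relations, and nerve data --- so there is no uniform proof beyond the Bruhat--Tits plus Helly template; in practice the proposition is established by quoting the cited theorems after checking definitional compatibility.
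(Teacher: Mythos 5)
Your proposal is correct and takes essentially the same approach as the paper: the paper offers no proof of this proposition at all, merely the five citations, and your plan of invoking each cited source after checking that its formulation of property ${\rm F}\mathcal{A}_n$ agrees with Definition 2.2 is exactly what the paper implicitly does. Your added summary of the Bruhat--Tits-plus-Helly template behind the cited results is accurate background but goes beyond what the paper records.
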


The above proposition shows that Corollary B follows by Theorem A.

For a  proof of Proposition E we will need the following result.
\begin{lemma}
\label{noZ}
Let $G$ be a group with property ${\rm F}\mathcal{B}_{\widetilde{A}_n}$. Then $G$ has no quotient isomorphic to $\Z$.
\end{lemma}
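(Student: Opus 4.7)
The plan is to argue by contrapositive: assuming $G$ admits a surjection onto $\Z$, I will exhibit a type-preserving simplicial $G$-action on some member of $\mathcal{B}_{\widetilde{A}_n}$ with no global fixed point.

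The building I would use is the Coxeter complex $\Sigma$ of the affine Weyl group $\widetilde{W} = \mathrm{Sym}(n+1) \ltimes Q^\vee$ of type $\widetilde{A}_n$, where $Q^\vee \cong \Z^n$ is the translation (coroot) lattice. Geometrically $|\Sigma|$ is $\R^n$ tessellated by congruent $n$-simplices, and as a simplicial complex $\Sigma$ is the standard thin building of type $\widetilde{A}_n$, so it lies in $\mathcal{B}_{\widetilde{A}_n}$. The vertex coloring is the $(n+1)$-coloring given by the $\widetilde{W}$-orbits of the vertices of the fundamental chamber, so by construction the whole affine Weyl group $\widetilde{W}$, and in particular its translation subgroup $Q^\vee$, acts on $\Sigma$ by type-preserving simplicial automorphisms.

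Given a surjection $q : G \twoheadrightarrow \Z$, I would then pick any embedding $\iota : \Z \hookrightarrow Q^\vee$ (say onto one coordinate factor of $\Z^n$) and form the composite $\rho := \iota \circ q : G \to Q^\vee \subset \widetilde{W}$ acting on $\Sigma$. By the previous paragraph this is a type-preserving simplicial action of $G$ on a building in $\mathcal{B}_{\widetilde{A}_n}$. Since $Q^\vee$ acts on $|\Sigma| = \R^n$ by nontrivial translations, every nontrivial element of $\rho(G)$ moves every point of $|\Sigma|$, so $\rho(G)$ has no global fixed point. This contradicts property ${\rm F}\mathcal{B}_{\widetilde{A}_n}$ and finishes the proof.

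The only point that needs a citation is that the Coxeter complex of an affine Coxeter system is a building of the corresponding type in the sense used here, and that the translation lattice acts on it by color-preserving simplicial automorphisms. Both are standard facts from \cite{Abramenko} and \cite{Ronan}: every Coxeter complex is a thin building of its type, and the vertex types are $\widetilde{W}$-invariant by definition, hence preserved by any subgroup. I do not expect this to be the main obstacle, since no serious calculation is involved; the argument amounts to packaging these pieces together.
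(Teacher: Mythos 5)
Your proposal is correct and follows essentially the same route as the paper: both compose the surjection $G\twoheadrightarrow\Z$ with an embedding of $\Z$ into the affine Weyl group of type $\widetilde{A}_n$ acting on its Coxeter complex, viewed as a thin building in $\mathcal{B}_{\widetilde{A}_n}$. The only cosmetic difference is how fixed-point-freeness is verified: you use that a nontrivial translation of $|\Sigma|\cong\R^n$ moves every point, while the paper picks an arbitrary infinite-order element and notes that every simplex stabilizer $wW_Jw^{-1}$ is finite; both arguments are valid.
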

\begin{proof}
Let us assume that $G$ has a quotient isomorphic to $\Z$. Then there exists a surjective homomorphism $\Phi: G\twoheadrightarrow \Z$.
Let $(W,I)$ be the Coxeter system of type $\widetilde{A}_n$ and $\Sigma(W,I)$ the associated Coxeter complex (thin building of type $\widetilde{A}_n$). Since $W$ is an affine Coxeter group, there exists an element $w_0\in W$ with infinite order. 
The action
\[ 
\Psi\circ\Phi: G\twoheadrightarrow \Z\cong \langle w_0 \rangle \rightarrow Aut(\Sigma(W,I))
\]
where $\Psi$ acts via left multiplication is type preserving. Let $wW_J$ for $J\subset I$ be an arbitrary simplex in $\Sigma(W,I)$. Then ${\rm stab}(wW_J)=wW_Jw^{-1}$. Since $J\neq I$, the special parabolic subgroup $W_J$ is finite and therefore $wW_Jw^{-1}$ is finite. Thus the action $\Psi\circ\Phi$  has no fixed point which is a contradiction.
\end{proof}

We now state an useful elementary fact.
\begin{proposition}
If a finitely generated group $G$ has property ${\rm F}\mathcal{B}_{\tilde{A}_n}$, then the abelianization $G^{ab}=G/[G,G]$ of $G$ is finite.
\end{proposition}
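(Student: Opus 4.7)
The plan is to combine the hypothesis that $G$ is finitely generated with Lemma \ref{noZ} via the structure theorem for finitely generated abelian groups. First I would note that since $G$ is finitely generated, so is its abelianization $G^{ab}$. By the structure theorem, we can write
\[
G^{ab} \cong \Z^r \oplus T,
\]
where $r \geq 0$ is the free rank and $T$ is a finite abelian group.

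Next I would argue by contradiction: suppose $G^{ab}$ is infinite, so that $r \geq 1$. Then the projection onto the first $\Z$-summand composed with the abelianization map $G \twoheadrightarrow G^{ab}$ yields a surjective homomorphism $G \twoheadrightarrow \Z$. This directly contradicts Lemma \ref{noZ}, which asserts that a group with property ${\rm F}\mathcal{B}_{\widetilde{A}_n}$ admits no quotient isomorphic to $\Z$. Hence $r = 0$ and $G^{ab}$ is finite.

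There is essentially no obstacle here: the work has already been done in Lemma \ref{noZ}, and the present proposition is just the translation of the absence of a $\Z$-quotient into a statement about the abelianization, using that finite generation of $G$ passes to $G^{ab}$ so that the structure theorem applies.
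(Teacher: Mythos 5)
Your proof is correct and follows exactly the same route as the paper: apply the structure theorem for finitely generated abelian groups to $G^{ab}$ and use a projection onto a $\Z$-summand to contradict Lemma \ref{noZ}. No issues.
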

\begin{proof}
Let us consider the following natural map:
\[
\pi:G\twoheadrightarrow G^{ab}
\]
Hence $G$ is a finitely generated group, its abelianization is a finitely generated abelian group. Therefore by the fundamental theorem of finitely generated abelian groups we have the following decomposition
\[
G^{ab}\cong \Z^{k}\times Tor(G),
\]
where $Tor(G)$ is a finite group. Assume that $k\geq 1$, then we get a surjective map
\[
G\twoheadrightarrow G^{ab}\cong \Z^{k}\times Tor(G)\twoheadrightarrow \mathbb{Z}
\]
which is by Lemma \ref{noZ} a contradiction.
\end{proof}

\section{Proofs}
\subsection{Proof of Proposition E}
Before we can prove Proposition E we need a result by Grothendieck. 
Recall that a field extension $L/K$ is said to be finitely generated over $K$ if there exist finitely many elements $a_1,\ldots, a_l$ in $L$ such that the smallest field extension $K(a_1, \ldots, a_l)$ containing $a_1, \ldots, a_l$ equals $L$. 

For finitely  generated  field extensions $L/\mathbb{F}_p$, where we denote by $\mathbb{F}_p$ the prime field of $L$, there is a good description on integral (algebraic) closure of $\mathbb{F}_p$ in $L$, and similarly for fields extensions of $\mathbb{Q}$. 

\begin{proposition}(\cite[p.140 7.1.8]{Grothendieck})
\label{Grothendieck}
\begin{enumerate}
\item[(i)] Let $L$ be an infinite field of characteristic $p$. If $L$ is finitely generated over $\mathbb{F}_p$, then the integral closure of $\mathbb{F}_p$ in $L$ is equal to the intersection of all discrete valuation rings in $L$.
\item[(ii)] Let $L$ be a field of characteristic $0$. If $L$ is finitely generated over $\mathbb{Q}$, then the integral closure of $\mathbb{Z}$ in $L$ is equal to the intersection of all discrete valuation rings in $L$.
\end{enumerate}
\end{proposition}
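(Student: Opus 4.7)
The plan is to establish both inclusions between the integral closure $R$ of the prime subring $k_0 \in \{\mathbb{F}_p, \Z\}$ in $L$ and the intersection $\bigcap_V V$ of all discrete valuation rings $V \subset L$ with $\mathrm{Frac}(V) = L$. The inclusion $R \subseteq \bigcap_V V$ is immediate: every such DVR $V$ is integrally closed in $L$ and contains $k_0$ automatically, hence contains every element of $L$ that is integral over $k_0$.

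For the reverse inclusion, given $a \in L$ not integral over $k_0$, I will exhibit a DVR $V \subset L$ with $1/a \in \mathfrak{m}_V$, so that $a \notin V$. If $a$ is transcendental over the prime field $k$, I extend $\{1/a\}$ to a transcendence basis $(1/a, t_2, \ldots, t_d)$ of $L/k$; the $(1/a)$-adic DVR on $k(1/a)$ extends to a DVR on $k(1/a, t_2, \ldots, t_d)$ via the Gauss construction (the value of a polynomial in $t_2, \ldots, t_d$ is the minimum of the values of its coefficients), with the same uniformizer $1/a$. Since $L/k$ is finitely generated as a field, $L$ is a finite algebraic extension of $k(1/a, t_2, \ldots, t_d)$, so Krull--Akizuki produces a Dedekind domain as the integral closure of this DVR in $L$; localizing at any maximal ideal lying above $(1/a)$ (which exists by going-up) yields the required DVR of $L$.

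If instead $a$ is algebraic over $k$ but not integral, then necessarily $\mathrm{char}(L) = 0$, because any element algebraic over $\mathbb{F}_p$ satisfies a monic polynomial over $\mathbb{F}_p$ and is therefore automatically integral. So $a \in \Q(a) \subseteq L$ fails to be an algebraic integer, and some prime $\mathfrak{p}$ of $\mathcal{O}_{\Q(a)}$ satisfies $v_\mathfrak{p}(a) < 0$; the same Gauss-extension-then-Krull--Akizuki recipe now applied across $L/\Q(a)$ delivers a DVR of $L$ not containing $a$.

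The crux is producing a \emph{discrete} valuation ring rather than a general valuation ring. Krull's theorem already identifies $R$ as the intersection of all valuation rings of $L$ containing $k_0$, without any finiteness hypothesis, but DVRs are a strictly smaller class. The finite generation of $L$ over its prime field is precisely what allows the upgrade, both through Abhyankar's inequality bounding valuation rank and through Krull--Akizuki preserving the one-dimensional Noetherian normal property under finite algebraic extensions; this is the step where the hypothesis genuinely enters, ensuring that height-one localizations of the integral closures above are genuine DVRs rather than rank-one valuation rings with non-discrete value group.
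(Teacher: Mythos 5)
The paper itself gives no proof of this proposition: it is quoted as a black box with a citation to EGA~II, 7.1.8, so there is nothing in the source to compare your argument against line by line. On its own merits, your proof is correct and follows the standard route. The inclusion of the integral closure $R$ of the prime ring into $\bigcap_V V$ is exactly as you say, since a discrete valuation ring with fraction field $L$ contains the prime ring and is integrally closed in $L$. For the converse you correctly split according to whether the non-integral element $a$ is transcendental over the prime field (the only possibility in characteristic $p$, as you note) or algebraic but non-integral (only possible in characteristic $0$), and in each case you manufacture a discrete valuation that is negative on $a$ on a small subfield ($k(1/a)$ or $\Q(a)$), push it across a transcendence basis by the Gauss extension (which keeps the value group equal to $\Z$), and then up the remaining finite extension to $L$ via Krull--Akizuki; the finite generation of $L$ over its prime field is used exactly where you say it is, to guarantee that the transcendence basis is finite and the top extension is finite. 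Three minor points: the existence of a prime of the integral closure above the maximal ideal is the lying-over theorem rather than going-up; to see that the localization at such a prime is a DVR of $L$ you should record that the integral closure is Noetherian of dimension one (Krull--Akizuki), integrally closed, and has fraction field equal to $L$, hence is Dedekind; and the appeal to Abhyankar's inequality in your closing paragraph is decorative, since your construction never produces a valuation whose discreteness is in doubt. None of these affects correctness.
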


Now we are ready to prove Proposition E.
\begin{proof}
Let $\rho:G\rightarrow\GL_{n+1}(K)$
be a linear representation over a field $K$. Let $\left\{g_1, \ldots, g_l\right\}$ be a finite generating set of $G$ and 
 let $K_{\rho}$ be the subfield of $K$ generated by the coefficients of the matrices $\rho(g_i)$ for $i=1,\ldots, l$. We get $\rho(G)\subseteq{\rm GL}_{n+1}(K_\rho)$. Further, the field $K_\rho$ is finitely generated over its prime field $\mathbb{F}$. Therefore there exist elements $a_1, \ldots, a_m\in K_\rho$ such that $K_\rho=\mathbb{F}(a_1,\ldots, a_m)$. 
 
If the characteristic of $K$ is positive and the elements $a_1, \ldots, a_m$ are algebraic over $\mathbb{F}$, then $K_\rho$ is a finite field and hence $\rho(G)$ is finite. 

Otherwise  $K_\rho$ is infinite. Since $K_\rho$ is finitely generated  over $\mathbb{F}$ and infinite, there exists a discrete valuation $\nu$ on $K_\rho$. Let $\mathcal{O}_{\nu}$ be the corresponding valuation ring and $\Delta(K_\rho^{n+1}, \nu)$ be the associated building of type $\widetilde{A}_n$, for details see \cite[chap. 9]{Ronan}. Let $\GL_{n+1}(K_\rho)^\circ$ be the kernel of the homomorphism
\[
\nu\circ\det:\GL_{n+1}(K_\rho)\rightarrow \Z.
\]
Since $\rho(G)$ has property $F\mathcal{B}_{\widetilde{A}_n}$, by Lemma \ref{noZ} this group  has no quotient isomorphic to $\Z$. This shows that $\rho(G)$ is contained in $\GL_{n+1}(K_\rho)^\circ$, which acts by type preserving automorphisms on $\Delta(K_\rho^{n+1}, \nu)$.  Since $\rho(G)$ has property ${\rm F}\mathcal{B}_{\widetilde{A}_n}$, there is a vertex $x$ of $\Delta(K_\rho^{n+1}, \nu)$ which is  invariant under $\rho(G)$. Hence $\rho(G)$ is contained in the stabilizer of $x$. All vertex stabilizer are conjugate to $\GL_{n+1}(\mathcal{O}_\nu)$.
Thus for each $g\in G$ the coefficients of the characteristic polynomial of $\rho(g$) belong to $\mathcal{O}_\nu$.  We can do this construction for each discrete valuation on $K_\rho$, hence the coefficients of the characteristic polynomial of $\rho(g)$  lie in the intersection $\bigcap\limits_{\nu\text{ disc. val. on }K_\rho} \mathcal{O}_\nu$.

If the characteristic of $K$ is zero, then by Proposition \ref{Grothendieck}(ii) this intersection is equal to the integral closure of $\Z$ in $K_{\rho}$ and hence the coefficients of the characteristic polynomial and his roots  are integral over $\Z$.

If the characteristic of $K$ is $p$,  then by Proposition \ref{Grothendieck}(i) the intersection $\bigcap\limits_{\nu\text{ disc. val. on }K_\rho} \mathcal{O}_\nu$ is equal to the integral closure of $\mathbb{F}_p$ in $K_{\rho}$ and hence the coefficients of the characteristic polynomial and his roots are integral (algebraic) over $\mathbb{F}_p$. Let $\lambda$ be an eigenvalue of $\rho(g)$. Then there exists  a monic polynomial $f$ in $\mathbb{F}_p[X]$  with $f(\lambda)=0$. Thus the field extension $[\mathbb{F}_p(\lambda):\mathbb{F}_p]\leq {\rm deg}(f)$ is finite, therefore $\mathbb{F}_p(\lambda)$ is a finite field of order $q=p^k$ for suitable $k\in\mathbb{N}$. Since $\lambda\in\mathbb{F}_p(\lambda)^*$,  it follows by Lagrange's Theorem that $\lambda^{q-1}=1$. Hence $\lambda$ is a root of unity.
\end{proof}

Before we can prove the main theorems we need Schur's result about linear torsion groups. 
\begin{theorem}(\cite[9.9]{Lam})
\label{Schur}
Let $K$ be an arbitrary field and $n\in\mathbb{N}$. Then every finitely generated torsion subgroup of ${\rm GL}_n(K)$ is finite. 
\end{theorem}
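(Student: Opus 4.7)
The plan is to reduce the statement to the assertion that $G$ has bounded exponent, at which point one invokes the classical theorem of Burnside that a subgroup of $\GL_n(K)$ of bounded exponent is finite. The reduction is an eigenvalue analysis in the spirit of Proposition E above, with the torsion hypothesis on $G$ now playing the role that the fixed-point property played there.

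First I would pass to a finitely generated ring of coefficients: choose generators $g_1,\ldots,g_k$ of $G$ and let $R\subseteq K$ be the subring generated by the entries of the matrices $g_i^{\pm 1}$ together with the prime subring of $K$. Then $R$ is a finitely generated commutative ring, $G\subseteq\GL_n(R)$, and every eigenvalue of every element of $G$ lives in the algebraic closure $\overline{K}$. For $g\in G$ of order $m$, the minimal polynomial of $g$ divides $X^m-1$, so each eigenvalue $\lambda$ of $g$ is a root of unity that satisfies a monic polynomial of degree at most $n$ over the prime field $\mathbb{F}$. In characteristic $p>0$ this forces $\lambda$ into the finite field $\mathbb{F}_{p^{n!}}$, hence $\lambda^{p^{n!}-1}=1$; in characteristic zero, $\mathbb{Q}(\lambda)$ is a cyclotomic field $\mathbb{Q}(\zeta_N)$ with $\varphi(N)\leq n$, and only finitely many $N$ satisfy this inequality. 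In either case the orders of the eigenvalues of elements of $G$ lie in a finite set depending only on $n$ and $\mathrm{char}\,K$.

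From bounded eigenvalue orders I would deduce bounded exponent for $G$ via the Jordan decomposition $g=g_s g_u$ in $\GL_n(\overline{K})$. The semisimple part $g_s$ has order equal to the $\mathrm{lcm}$ of its eigenvalue orders, hence uniformly bounded. The unipotent part $g_u$ satisfies $(g_u-I)^n=0$, so in characteristic $p>0$ its order divides $p^{\lceil\log_p n\rceil}$, while in characteristic zero a unipotent element of finite order must be the identity (since $(I+N)^m=I$ with $N\neq 0$ forces $m\cdot N + \tbinom{m}{2}N^2+\cdots=0$, which is impossible for $m\geq 1$ in characteristic zero). Thus there is a uniform $e=e(n,\mathrm{char}\,K)$ with $g^e=1$ for every $g\in G$, and Burnside's theorem then yields that the finitely generated linear group $G$ of exponent dividing $e$ is finite.

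The main obstacle is Burnside's theorem itself; the reduction above is elementary once one has the eigenvalue and Jordan decomposition facts, but Burnside's result is the deep ingredient, proved via a careful analysis of the finite-dimensional subalgebra of $\mathrm{Mat}_n(K)$ spanned by $G$, bounding the number of distinct traces of its elements (each a sum of at most $n$ roots of unity drawn from a finite set) and deducing finiteness of $G$ from the resulting bound on the number of possible characteristic polynomials together with a linearization argument.
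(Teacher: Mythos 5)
The paper does not actually prove this statement; it quotes it from \cite[9.9]{Lam} (Schur's theorem) and uses it as a black box, so there is no internal proof to compare against. Your attempt, however, contains a genuine error at its central step. You claim that an eigenvalue $\lambda$ of a torsion element $g\in\GL_n(K)$ ``satisfies a monic polynomial of degree at most $n$ over the prime field $\mathbb{F}$'', and from this you extract a bound on the order of $\lambda$ and hence a uniform exponent $e=e(n,\mathrm{char}\,K)$ for $G$. But the degree-$n$ monic polynomial that $\lambda$ satisfies is the characteristic polynomial of $g$, whose coefficients lie in $K$, not in the prime field; over the prime field $\lambda$ only satisfies $X^m-1$, whose degree is the (unbounded) order of $g$. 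The uniform exponent bound is in fact false: for any $N$ the cyclic group generated by $\mathrm{diag}(\zeta_N,1,\dots,1)$ is a finitely generated torsion subgroup of $\GL_n(\C)$ of exponent $N$, so no $e(n,0)$ exists; similarly $\GL_1(\mathbb{F}_{p^{100}})$ has elements whose eigenvalue has degree $100$ over $\mathbb{F}_p$ even though $n=1$. (Each such group is of course finite, so the theorem survives, but your route to it collapses.) The finite generation of $G$, which you package into the ring $R$ and then never use, is exactly what must rescue the eigenvalue bound: the field $L$ generated by the entries of the generators is finitely generated over its prime field, the algebraic closure of the prime field in $L$ is a finite extension of it, and hence the extensions of $L$ of degree at most $n$ contain only finitely many roots of unity --- giving a bound on eigenvalue orders that depends on $G$ through its generators, not only on $n$ and the characteristic.

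A second, smaller problem: the version of Burnside's theorem you invoke, ``a subgroup of $\GL_n(K)$ of bounded exponent is finite'', is false over infinite fields of positive characteristic (the additive group of $\overline{\mathbb{F}_p}$ embeds as unipotent upper-triangular matrices, an infinite group of exponent $p$); one again needs finite generation there, and the finitely generated bounded-exponent linear case is proved by essentially the same trace argument as the full theorem, so the reduction buys little. The standard proof (as in \cite{Lam}) runs directly: bound the set of possible traces (each is a sum of $n$ roots of unity drawn from the finite set produced by the finitely-generated-field argument above), apply Burnside's lemma on the $K$-algebra spanned by an absolutely irreducible $G$ to show that finitely many traces force $G$ to be finite, and handle the reducible case by induction on $n$. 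Your unipotent/semisimple analysis of the Jordan decomposition is correct and could be retained, but the exponent bound feeding into it must be derived from the finitely generated coefficient field rather than from $n$ and the characteristic alone.
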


\subsection{Proof of Theorem A}

\begin{proof}
Let $\rho:G\rightarrow\GL_{n+1}(K)$
be a linear representation over a field $K$ with characteristic $p$.

Let $\left\{g_1, \ldots, g_l\right\}$ be a finite generating set of $G$ and 
 let $K_{\rho}$ be the subfield of $K$ generated by the coefficients of the matrices $\rho(g_i)$ for $i=1,\ldots, l$. We get $\rho(G)\subseteq{\rm GL}_{n+1}(K_\rho)$. 
Our first goal is to show that the order of $\rho(g)$ for $g\in G$ is finite.

Let $K^{alg}_\rho$ be the algebraic closure of $K_\rho$ and 
\[
\iota: \GL_{n+1}(K_\rho)\rightarrow \GL_{n+1}(K^{alg}_\rho)
\]
be the canonical embedding.  
Since $K^{alg}_\rho$ is algebraically closed field, the matrix $\iota\circ\rho(g)$ is triangularizable. Therefore there exist $A, B \in\GL_{n+1}(K^{alg}_\rhoƒ)$ with
\[
\iota\circ\rho(g)=A\cdot B \cdot A^{-1}
\]
where $B$ is of the form
 \[
   B=
  \left[ {\begin{array}{cccc}
   \lambda_1 & * & *&\ldots\\
   0 & \lambda_2 &*&\ldots\\
   0 & 0 &\lambda _3&\ldots\\
   \ldots &\ldots&\ldots&\ldots\\
  \end{array} } \right]
\]
where $\lambda_1, \ldots, \lambda_{n+1}$ are the eigenvalues of $\iota\circ\rho(g)$. The eigenvalues of $\iota\circ\rho(g)$ are by Proposition E(i) roots of unity, therefore the  order of $\lambda_j$ is finite. Let $k=lcm\left\{ o(\lambda_j)\mid j=1, \ldots, n+1\right\}$. Then 
\[
  B^k=
  \left[ {\begin{array}{cccc}
   1 & * & *&\ldots\\
   0 & 1 &*&\ldots\\
   0 & 0 & 1&\ldots\\
   \ldots &\ldots&\ldots&\ldots\\
  \end{array} } \right]
=\left[ {\begin{array}{cccc}
   1 & 0 & 0&\ldots\\
   0 & 1 &0&\ldots\\
   0 & 0 & 1&\ldots\\
   \ldots &\ldots&\ldots&\ldots\\
  \end{array} } \right]
+
\left[ {\begin{array}{cccc}
   0 & * & *&\ldots\\
   0 & 0 &*&\ldots\\
   0 & 0 & 0&\ldots\\
   \ldots &\ldots&\ldots&\ldots\\
  \end{array} } \right]
\]
and by Frobenius homomorphism we get 
\[
  B^{k\cdot p^l}=
  \left[ {\begin{array}{cccc}
   1 & 0 & 0 &\dots\\
   0 & 1 & 0 &\ldots\\
   0 & 0 & 1 &\ldots\\
   \ldots &\ldots&\ldots&\ldots\\
  \end{array} } \right]
={\rm Id}
\]
for $l\in\mathbb{N}$ with $p^l\geq n+1$. Therefore the order of $B$ and hence of $\iota\circ\rho(g)$ and $\rho(g)$ is finite.

So $\rho(G)$ is a finitely generated torsion group and by Theorem \ref{Schur} this group is finite.
\end{proof}

\subsection{Proof of Theorem C}

\begin{proof}
Let $\rho:G\rightarrow{\rm GL}_{n+1}(\mathbb{Q})$ be a linear representation of $G$. Since there exists a positive definite quadratic form on $\Q^{n+1}$ which is invariant under $\rho(G)$, the image of $\rho$ is contained in a conjugate of orthogonal matrices and therefore the eigenvalues of $\rho(g)$ for $g\in G$  have absolute value $1$.

By Proposition E(ii) the coefficients of the characteristic polynomial  of $\rho(g)$, denoted by $\chi_{\rho(g)}$, are integral over $\Z$ in $\Q$, hence $\chi_{\rho(g)}\in\Z[X]$. 
The set of all monic polynomials of degree $n+1$ with integer coefficients having roots with absolute value one is finite. To see this, we write
\[
X^{n+1}+a_{n}X^n+\ldots+a_0=\prod_{j=1}^{n+1}(X-\lambda_j)
\]
where $a_j\in\Z$ and $\lambda_j$ are the roots of the polynomial.

Since the $a_j$ are integers, each $a_j$ is limited to at most $2\cdot\binom{n}{j}+1$ values and therefore the number of polynomial is finite and hence the set 
\[
E:=\left\{ \lambda\in\C\mid \lambda\text{ is an eigenvalue of }\rho(g)\text{ for }g\in G\right\}
\]
is finite. Thus each $\lambda\in E$ has finite order.
Let $g\in G$, then $\iota\circ\rho(g)$ is diagonalizable, where 
$\iota:{\rm GL}_{n+1}(\mathbb{Q})\rightarrow{\rm GL}_{n+1}(\mathbb{C})$ is the canonical embedding. Since each eigenvalue has finite order the matrices $\iota\circ\rho(g)$ and $\rho(g)$ have finite order. So $\rho(G)$ is a finitely generated torsion group and by Theorem \ref{Schur} this group is finite. 
\end{proof}

\subsection{Proof of Theorem D}
\begin{proof}
Let $\rho:G\rightarrow{\rm GL}_{n+1}(\C)$ be a linear representation of $G$.
Let $\left\{g_1, \ldots, g_l\right\}$ be a finite generating set of $G$ and 
 let $\C_{\rho}$ be the subfield of $\C$ generated by the coefficients of the matrices $\rho(g_i)$ for $i=1,\ldots, l$. We get $\rho(G)\subseteq{\rm GL}_{n+1}(\C_\rho)$. 
Our goal is to show, that the eigenvalues of $\rho(g)$ for $g\in G$ are roots of unity.

It follows by Proposition E(ii) that the eigenvalues of any element of $\rho(G)$ are in some number field. Since $\rho$ has image with compact closure, the eigenvalues of $\rho(g)$ for $g\in G$ have absolute value $1$ and since each representation of degree $n+1$ has image with compact closure it follows by Dirichlet Unit Theorem \cite[6.1.6]{Ash} that the eigenvalues of $\rho(g)$ for $g\in G$ are roots of unity. 

Further $\rho(g)$ is diagonalizable and since each eigenvalue has finite order the matrix $\rho(g)$ has finite order. So $\rho(G)$ is a finitely generated torsion group and by Theorem \ref{Schur} this group is finite. 
\end{proof}

\subsection*{Acknowledgements} The author would like to thank the referee for many helpful comments.

\end{document}